\newenvironment{sistema}%
{\left\lbrace\begin{array}{@{}l@{}}}%
{\end{array}\right.}
\newcommand{\ZZ}{\ensuremath{\mathds{Z}}}
\newcommand{\QQ}{\ensuremath{\mathds{Q}}}
\newcommand{\CC}{\ensuremath{\mathds{C}}}
\newcommand{\PP}{\ensuremath{\mathds{P}}}
\newcommand{\HH}{\ensuremath{\mathcal{H}}}
\newcommand{\cfs}{\ensuremath{\mathcal{S}}}
\newcommand{\SL}{\ensuremath{\mathrm{SL}}}
\newcommand{\Gammait}{\ensuremath{\mathit{\Gamma}}}
\newcommand{\Span}{\ensuremath{\mathrm{span}}}
\newcommand{\num}{\mathop{\mathrm{Numerator}}}
\newcommand{\den}{\mathop{\mathrm{Denominator}}}
\theoremstyle{definition}
\newtheorem{definizione}{Definition}[section]
\theoremstyle{plain}
\newtheorem{lemma}[definizione]{Lemma}
\theoremstyle{plain}
\theoremstyle{plain}
\theoremstyle{plain}
\newtheorem{proposizione}[definizione]{Proposition}
\theoremstyle{plain}
\newtheorem{result}[definizione]{Result}
\theoremstyle{plain}
\theoremstyle{remark}
\newtheorem{remark}[definizione]{Remark}
\theoremstyle{remark}
\newtheorem{algoritmo}[definizione]{Algorithm}
\title{Equations and Rational Points \\ of the Modular Curves $X_0^+(p)$}
\author{Pietro Mercuri}
\date{}
\begin{document}

\maketitle

\begin{abstract}
Let $p$ be an odd prime number and let $X_0^+(p)$ be the quotient of the classical modular curve $X_0(p)$ by the action of the Atkin-Lehner operator $w_p$. In this paper we show how to compute explicit equations for the canonical model of $X_0^+(p)$. Then we show how to compute the modular parametrization, when it exists, from $X_0^+(p)$ to an isogeny factor $E$ of dimension 1 of its jacobian $J_0^+(p)$. Finally we show how use this map to determine the rational points on $X_0^+(p)$ up to a large fixed height.
\end{abstract}

\section{Introduction}

The classical problem to determine the rational points of the modular curve $X_0(p)$, when $p$ is a prime number, has been solved by Mazur in his paper \cite{Maz76} in 1975. In his work, Mazur proved, for $p>37$, that the only rational points of the curve $X_0(p)$ are cusps and CM points. In the same paper Mazur also refers to the interesting case of the modular curve $X_0^+(p)$. This curve is the quotient of $X_0(p)$ by the Atkin-Lehner involution $w_p$. See Galbraith \cite[Section 2]{Gal99}. For these curves a similar result is expected: for sufficiently large $p$ the only rational points of the curve $X_0^+(p)$ are cusps and CM points.

The genus $g$ of $X_0^+(p)$ growth with $p$, so for a fixed $g$ we have only finitely many curves. The cases $g$ equal to $0$ or $1$ are well known. If $g$ is equal to $0$, then $X_0^+(p)$ is isomorphic to $\PP^1$ and there are infinitely many rational points. If $g$ is equal to~$1$, then $X_0^+(p)$ is an elliptic curve whose Mordell-Weil group is torsion-free and has rank~$1$. Let us focus on the case $g>1$. Galbraith in his papers \cite{Gal96} and \cite{Gal99} describes a method to find explicit equations for the curves $X_0^+(p)$. If $X_0^+(p)$ is hyperelliptic he uses an \emph{ad hoc} technique. We recall that by \cite{HH96} that the curve $X_0^+(p)$ is hyperelliptic if and only if $g=2$. If $X_0^+(p)$ is not hyperelliptic, Galbraith uses the canonical embedding $\varphi\colon X_0^+(p) \hookrightarrow \PP^{g-1}$ to find equations for the curves. He finds explicit equations for all curves $X_0^+(p)$ with $g\le 5$. Here we present the following result.
\begin{result}
We use the method of Galbraith to find explicit equations for the canonical model of all the modular curves $X_0^+(p)$ with genus $g=6$ or $7$.
\end{result}
Equations for the thirteen curves of genus $6$ or $7$ are listed in Section \ref{sec:listeq}. The equations that we find have very small coefficients and have good reduction for each prime number $\ell\neq p$.

The technique used here play an important role in \cite{Mer15} in finding explicit equations defining the modular curves $X_{ns}^+(p)$. These curves are the modular curves associated to the normalizer of a nonsplit Cartan subgroup of the general linear group over a finite field with $p$ elements. Few is still known about them and they are harder to study with respect to modular curves $X_0^+(p)$ considered here.

Galbraith uses the equations of the curves $X_0^+(p)$ to look for rational points $P$ up to some bounds on the naive projective height $H$. See Silverman \cite[Chapter VIII, Section 5]{Sil86} for more details about the naive height on projective space. Using a projection onto a plane model, possibly singular, we check that if $X_0^+(p)$ has genus $g=6$ or $7$, then it has no rational points $P$, except the cusps and the CM points, for which $H(P)\le 10^4$. When the jacobian variety $J_0^+(p)$ of $X_0^+(p)$ has an isogeny factor over $\QQ$ of dimension 1, we improve this bound significantly. In particular we prove the following result.
\begin{result}
For the primes $p=163,197,229,269$ and $359$, the only rational points $P$ on $X_0^+(p)$ of naive height $H(P)\le 10^{10000}$ are cusps or CM points.
\end{result}

\section{The method to get explicit equations} \label{sec:algoeq}

In this section we show how to get explicit equations of the canonical model for modular curves. The final output is a list of quadrics with very small integer coefficients.

Let $\Gammait$ be a congruence subgroup of $\SL_2(\ZZ)$, let $\cfs_2(\Gammait)$ be the $\CC$-vector space of the cusp forms of weight $2$ with respect to $\Gammait$, and let $f_1,\ldots,f_g$ be a $\CC$-basis of $\cfs_2(\Gammait)$ where $g$ is the genus of the modular curve $X(\Gammait)$. If \mbox{$g>2$}, we use the canonical embedding to get the canonical model for $X(\Gammait)$. We know that $X(\Gammait)$ is isomorphic as compact Riemann surfaces to $\Gammait \backslash \HH^*$, where $\HH^*$ is the extended complex upper half plane. We also know that $\cfs_2(\Gammait)$ is isomorphic to the $\CC$-vector space of holomorphic differentials $\Omega^1(X(\Gammait))$ with respect to the map $f(\tau)\mapsto f(\tau)d\tau$. See Diamond and Shurman \cite{DS05} for more details about these results. Using these isomorphisms, the canonical embedding can be expressed in the following way
\begin{align*}
\varphi\colon X(\Gammait) & \to \PP^{g-1}(\CC) \\
\Gammait\tau & \mapsto (f_1(\tau):\ldots:f_g(\tau)),
\end{align*}
where $\tau\in\HH^*$. The Enriques-Petri Theorem (see Griffith, Harris \cite[Chapter 4, Section 3, pag. 535]{GH78} or Saint-Donat \cite{Sd73}), states that the canonical model of a complete nonsingular non-hyperelliptic curve is entirely cut out by quadrics and cubics. Moreover, it is cut out by quadrics if it is neither trigonal, nor a quintic plane curve with genus exactly~$6$.

When $X(\Gammait)$ can be defined over $\QQ$, we can look for equations defined over~$\QQ$. The Enriques-Petri Theorem is proved over algebraically closed fields, but its proof can be suitable modified to work over $\QQ$. Alternatively, if we find equations defined over $\QQ$, then we can check by MAGMA that their zero locus $Z$ is an algebraic curve with the same genus as $X(\Gammait)$. An application of the Hurwitz genus formula for genus $g>1$, to the morphism $\varphi\colon X(\Gammait)\to Z$ implies that $\varphi$ is an isomorphism.

Finding equations for $Z$ is equivalent to finding generators of the homogeneous ideal $\mathcal{I}$ defining it in $\PP^{g-1}$. Let $\mathcal{I}_d$ be the set of homogeneous elements of $\mathcal{I}$ of degree exactly $d$. We explain how to find generators of $\mathcal{I}$ that belong to $\mathcal{I}_d$ for a fixed $d$. By the Enriques-Petri Theorem and by Hasegawa and Hashimoto \cite{HH96}, we know that if $g>2$, the ideal $\mathcal{I}$ is generated by elements in $\mathcal{I}_d$ for $d=2,3$. Now, we fix the degree $d$ and we suppose that we know the first $m$ Fourier coefficients of the $q$-expansions of a basis $\mathcal{B}=\{f_1,\dots,f_g\}$ of $\Omega^1(X(\Gammait))$, where $m>d(2g-2)$. This condition on $m$ guarantees that if we have a polynomial $F$ with rational coefficients and $g$ unknowns such that $F(f_1,\ldots,f_g)\equiv 0 \pmod{q^{m+1}}$, then $F(f_1,\ldots,f_g)=0$. See \cite[Section 2.1, Lemma 2.2, pag 1329]{BGGP05} for more details. We also assume that the Fourier coefficients of the basis $\mathcal{B}$ are algebraic integers.

There is a number field $K$, of degree~$D$ over~$\QQ$, which contains all the coefficients of all the elements of $\mathcal{B}$. Moreover, if the Fourier coefficients are algebraic integers, they have integer coordinates with respect to a suitable chosen basis of $K$ over $\QQ$. If $a_n(f_i)$ is the $n$-th Fourier coefficient of $f_i$, we denote the $k$-th coordinate of $a_n(f_i)$ in $K$ by $c(i,n,k)\in\ZZ$, where $k=1,\ldots,D$. We can associate to $\mathcal{B}$ a matrix $M$ whose integer entries are the coordinates of the Fourier coefficients where each row $i$ corresponds to an element $f_i$ of $\mathcal{B}$. An explicit way to set the entries of $M=(u_{ij})$ is to choose $u_{ij}=c(i,n,k)$, where $j=(n-1)D+k,$ for $i=1,\ldots,g$, for $n=1,\dots,m$ and for $k=1,\ldots,D$. The matrix $M$ has $g$ rows and $mD$ columns, where $m$ is the number of the first Fourier coefficients used and $g$ is the cardinality of the basis. Since $m>d(2g-2)$ and $g>0$, we always have $m\geq g$, hence the rank of $M$ is $g$. To find equations defining $X(\Gammait)$ we compute all the monomials $F_j$ of degree $d$ where the indeterminates are the elements of $\mathcal{B}$. The elements of a $\ZZ$-basis of the space $S$ of the solutions of the homogeneous linear system in the unknowns $F_j$ are the coefficients of the desired equations.

We are interested in reducing the size of the coefficients of these equations and minimizing the number of primes $\ell$ such that the model has bad reduction modulo~$\ell$. To reduce the size of the coefficients, we apply the LLL-algorithm first to $M$ and then to the $\ZZ$-basis of $S$. We know that if the rank of $M$ modulo $\ell$ is less than $g$, then the canonical model of the curve that we find, is singular modulo $\ell$. We say that $M$ is \emph{optimal}, if the rank of $M$ modulo $\ell$ is exactly $g$ for each prime~$\ell$. In Algorithm~\ref{alg:badprimes} below, within a more general setting, we describe how to modify $\mathcal{B}$ to make $M$ optimal.

Now, we describe how to find the primes $\ell$ such that the canonical model has bad reduction. Let $\mathcal{I}_{\text{jac}}$ be the ideal generated by the polynomials defining the curve and by all the determinants of order $g-2$ of the jacobian matrix of the curve. We compute the elimination ideals $\mathcal{J}_i:=\mathcal{I}_{\text{jac}}\cap \QQ[x_i]$, for $i=1,\ldots,g$. If $\mathcal{J}_i\neq 0$, it turns out to be generated by $\lambda x_i^n$ with $\lambda,n\in\ZZ_{>0}$. The curve has bad reduction modulo any prime~$\ell$ such that $\ell\mid \lambda$.

\subsection{Algorithm}

Let $g$ and $m$ be positive integers such that $g\leq m$ and let $v_1,\ldots,v_g\in\ZZ^m$ be linearly independent vectors over $\QQ$. We describe a method to find a basis over $\ZZ$ of the lattice $L:=\Span_{\QQ}\{v_1,\ldots,v_g\} \cap \ZZ^m$. Let $L':=\Span_{\ZZ}\{v_1,\ldots,v_g\}$, so $L'$ is a subgroup of $L$, and let $J:=[L:L']$. We want to modify $L'$ and its basis until we have $J=1$ and so $L'=L$.

\begin{lemma} \label{lem:pdep}
Let the notation as above and let $p$ be a prime number. We have that $v_1,\ldots,v_g$ are linearly dependent in $\ZZ^m/p\ZZ^m$ if and only if $p\mid [L:L']$.
\end{lemma}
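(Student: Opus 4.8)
The plan is to reduce everything to the Smith normal form of the $g \times m$ integer matrix $V$ whose rows are $v_1,\ldots,v_g$. First I would record the basic facts I need: since $v_1,\ldots,v_g$ are linearly independent over $\QQ$, both $L'=\Span_{\ZZ}\{v_1,\ldots,v_g\}$ and $L$ are free $\ZZ$-modules of rank $g$, and the index $[L:L']$ is finite. I would also restate the hypothesis ``$v_1,\ldots,v_g$ are linearly dependent in $\ZZ^m/p\ZZ^m$'' as the assertion that the reduction $\bar V$ of $V$ modulo $p$ satisfies $\rk_{\FF_p}\bar V < g$.

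Next I would invoke the Smith normal form: there exist $P \in \GL_g(\ZZ)$ and $Q \in \GL_m(\ZZ)$ such that $V = PDQ$, where $D$ is the $g \times m$ matrix carrying positive integers $d_1 \mid d_2 \mid \cdots \mid d_g$ on its diagonal and zeros elsewhere (all $d_i$ are nonzero because $V$ has rank $g$ over $\QQ$). The proof then splits into two independent computations of the quantity $d_1 d_2 \cdots d_g$. For the rank modulo $p$: since $P$ and $Q$ are invertible over $\ZZ$, their reductions are invertible over $\FF_p$, so $\rk_{\FF_p}\bar V = \rk_{\FF_p}\bar D$, which equals the number of indices $i$ with $p \nmid d_i$. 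Hence $\rk_{\FF_p}\bar V < g$ holds if and only if $p \mid d_i$ for some $i$, that is, if and only if $p \mid d_1 d_2 \cdots d_g$.

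For the index I would argue as follows. Writing $q_1,\ldots,q_m$ for the rows of $Q$, these form a $\ZZ$-basis of $\ZZ^m$. Since $P$ is invertible over $\ZZ$, the rows of $V$ and the rows of $DQ$ span the same lattice, so $L' = \Span_{\ZZ}\{d_1 q_1,\ldots,d_g q_g\}$, while $\Span_{\QQ}\{v_1,\ldots,v_g\} = \Span_{\QQ}\{q_1,\ldots,q_g\}$. Because $q_1,\ldots,q_m$ is a $\ZZ$-basis of $\ZZ^m$, intersecting this $\QQ$-span with $\ZZ^m$ gives exactly $L = \Span_{\ZZ}\{q_1,\ldots,q_g\}$; here I would spell out that a vector in the $\QQ$-span has its last $m-g$ coordinates (in the $q$-basis) equal to zero, so membership in $\ZZ^m$ forces the first $g$ coordinates to be integers. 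Comparing the two bases then yields $[L:L'] = d_1 d_2 \cdots d_g$, and combining the two computations finishes the proof.

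I expect the main obstacle to be this index computation: one must argue carefully that $L$ --- the saturation of $L'$ inside $\ZZ^m$ --- is precisely $\Span_{\ZZ}\{q_1,\ldots,q_g\}$, which is where the full change of basis $Q \in \GL_m(\ZZ)$, rather than merely a basis adapted to $L'$, is essential. An alternative I would keep in reserve avoids the Smith normal form by showing directly that $L$ is saturated, hence a direct summand $\ZZ^m = L \oplus L''$; this gives $L \cap p\ZZ^m = pL$, so a $\ZZ$-basis $w_1,\ldots,w_g$ of $L$ stays independent modulo $p$, and writing $\bar v_i = \sum_j \bar A_{ij}\bar w_j$ with $\lvert\det A\rvert = [L:L']$ reduces the claim to the equivalence between $\det A \equiv 0 \pmod{p}$ and dependence of the $\bar v_i$.
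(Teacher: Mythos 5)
Your proof is correct, but it takes a genuinely different route from the paper. You reduce everything to the Smith normal form $V=PDQ$ and compute both sides of the equivalence in terms of the elementary divisors: the rank of $\bar V$ over $\FF_p$ drops exactly when $p\mid d_i$ for some $i$, and $[L:L']=d_1\cdots d_g$ via the identifications $L'=\Span_{\ZZ}\{d_1q_1,\ldots,d_gq_g\}$ and $L=\Span_{\ZZ}\{q_1,\ldots,q_g\}$; your justification of the latter (coordinates in the $q$-basis) is the one point that needs care, and you handle it correctly. The paper instead argues directly and more elementarily: $p\mid\#(L/L')$ iff $L/L'$ has an element of order exactly $p$ (Cauchy/Lagrange), i.e.\ iff there is $v\in L\setminus L'$ with $pv\in L'$; writing $v=\sum_i\alpha_iv_i$ with $\alpha_i\in\QQ$, the conditions $pv\in L'$, $v\neq 0$, $v\notin L'$ translate into the integers $p\alpha_1,\ldots,p\alpha_g$ being a nontrivial mod-$p$ dependence among the $v_i$, and conversely. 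Your approach costs the machinery of the Smith normal form but buys more: the full structure of $L/L'$ as $\bigoplus_i\ZZ/d_i\ZZ$ and the fact that $[L:L']$ divides every maximal minor of $V$, which is exactly the observation the paper needs immediately after the lemma to initialize Algorithm~2.3. The paper's argument is shorter and self-contained; your reserve alternative via saturation and $\det A$ is essentially an intermediate between the two.
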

\begin{proof}
We have that $p\mid [L:L']=\#(L/L')$ if and only if there is an element of $L/L'$ of order exactly $p$. This is equivalent to have an element $v\in L\setminus L'$ such that $pv\in L'$. The condition $v\in L$ implies that there are $\alpha_1,\ldots,\alpha_g\in\QQ$ such that $v=\alpha_1 v_1+\ldots+\alpha_g v_g$. The condition $pv\in L'$ implies that $p\alpha_1,\ldots,p\alpha_g$ are integers. The independence over $\QQ$ implies that $p\alpha_1,\ldots,p\alpha_g$ are not all zero. Finally, the condition $v\notin L'$ implies that $p\alpha_1,\ldots,p\alpha_g$ are not all zero modulo $p$.
\end{proof}
Let $M\in\ZZ^{m\times g}$ be the matrix whose columns are the vectors $v_1,\ldots,v_g$. It follows from Lemma \ref{lem:pdep} above that if a prime $p$ divides the index $J$, then $p$ divides the determinant $\Delta$ of any $g\times g$ submatrix of $M$.

\begin{algoritmo} \label{alg:badprimes}
\begin{description}

\item[Step 0] Choose a $g\times g$ submatrix of $M$ with nonzero determinant~$\Delta$. Set $\mathcal{P}:=\{\text{prime numbers dividing $\Delta$}\}$. Go to Step~1.

\item[Step 1] If $\mathcal{P}=\varnothing$ the algorithm terminates. If $\mathcal{P}\neq \varnothing$ go to Step~2.

\item[Step 2] Let $p\in \mathcal{P}$. If $v_1,\ldots,v_g$ are linearly dependent in $\ZZ^m/p\ZZ^m$, we have that $p\mid J$, and go to Step~3. Else set $\mathcal{P}:=\mathcal{P}\setminus \{p\}$ and go to Step~1.

\item[Step 3]  If $v_1,\ldots,v_g$ are linearly dependent in $\ZZ^m/p\ZZ^m$, up to reordering the $v_i$'s, there are, $\alpha_2,\ldots,\alpha_g\in\ZZ$ such that $v_1+ \alpha_2 v_2+\ldots+\alpha_g v_g=pv$. We replace $v_1$ by $v=\frac{1}{p}(v_1+ \alpha_2 v_2+\ldots+\alpha_g v_g)\in L\setminus L'$ and go to Step~2.

\end{description}
\end{algoritmo}

\begin{remark}
To reduce the number of primes to check, one can apply Gauss elimination to make $\Delta$ minimal.
\end{remark}

\begin{remark}
The algorithm terminates in finitely many steps. In fact the substitution $v_1\mapsto \frac{1}{p}(v_1+ \alpha_2 v_2+\ldots+\alpha_g v_g)$ implies that
\[
\Delta=\det(v_1,\ldots,v_g)\mapsto \det\left(\frac{1}{p}\left(v_1+\sum_{i=2}^g \alpha_i v_i\right),v_2,\ldots,v_g\right)=\frac{1}{p}\det(v_1,\ldots,v_g)=\frac{\Delta}{p}.
\]
Hence, after finitely many iterations of the algorithm the rank of $M$ modulo $p$ is $g$ for each prime $p$.
\end{remark}

\section{Computing the expected rational points}

We have a simple moduli interpretation of certain rational points of the modular curves $X_0^+(p)$. The \emph{expected} rational points on $X_0^+(p)$ are the points corresponding to the unique cusp or to elliptic curves with complex multiplication such that $p$ is split or ramified inside the endomorphism ring of the elliptic curve itself. We call a rational point \emph{exceptional} if it is not one of the expected ones.

Here we describe a way to find numerically the expected rational points on a modular curve $X_0^+(p)$, for $p$ an odd prime. 
We assume to know the first $m$ Fourier coefficients of a basis $f_1,\ldots,f_g$ of $\Omega^1(X_0^+(p))$, where $g$ is the genus of the modular curve. It is well known, see Stark \cite{Sta66}, that the orders in imaginary quadratic number fields with class number $1$ are the ones with discriminant
\[
\Delta=-3,-4,-7,-8,-11,-12,-16,-19,-27,-28,-43,-67,-163.
\]
The corresponding orders can be written in the form $\ZZ+\ZZ\tau$, where
\[
\tau=\begin{cases}
\frac{1+i\sqrt{|\Delta|}}{2} & \text{if }\Delta \text{ is odd}, \\
\frac{i\sqrt{|\Delta|}}{2} & \text{if }\Delta \text{ is even}.
\end{cases}
\]

Let $E$ be an elliptic curve with complex multiplication such that the discriminant $\Delta_E$ of its endomorphism ring $\mathcal{O}_E$ is a class number 1 discriminant. This means that $\Delta_E$ is in the list above. If $\Delta_E$ is a square modulo $p$ we can associate to $E$ a point on $X_0^+(p)$, see Galbraith \cite{Gal99}. The unique cusp is always rational. Let $\{1,\tau_E\}$ be the basis of the order $\mathcal{O}_E$ such that $\tau_E$ is defined as above. There is a suitable element $\hat{\tau}$ in the $\SL_2(\ZZ)$-orbit of $\tau_E$ in $\HH$ such that $P=(f_1(\hat{\tau}):\ldots:f_g(\hat{\tau}))$ is a rational point for the modular curve. If we know $\hat{\tau}$, we can evaluate $f_1(\hat{\tau}),\ldots,f_g(\hat{\tau})$ numerically using the $q$-expansions. If $m$ is large enough, it is quite easy recognize the rational coordinates of $P$. To compute the coordinates of the cusp it is enough take $\hat{\tau}=i\infty$, this means to set $q=0$ in the $q$-expansions. Now we explain how to compute $\hat{\tau}$ for the CM points.

We recall the moduli interpretation of the points of $X_0^+(p)$. Let $E'$ be an elliptic curve and let $C$ be a cyclic subgroup of order $p$ of the $p$-torsion subgroup $E'[p]$. We know that a point on $X_0^+(p)$ is an unordered pair $\{(E',C),(E'/C,E'[p]/C)\}$, where $E'/C$ is the unique elliptic curve, up to isomorphism, that is the image of the unique isogeny of $E'$ with kernel~$C$. Over $\CC$ this is $\{(\CC/\Lambda,\frac{1}{p}\ZZ+\Lambda),(\CC/\Lambda',\frac{1}{p}\ZZ+\Lambda')\}$ for some $\tau\in\HH$ such that $\Lambda=\ZZ+\tau\ZZ$ and $\Lambda'=\ZZ+\frac{-1}{p\tau}\ZZ$. If $\Delta_E$ is a square modulo~$p$, we know there is a principal prime ideal $\mathfrak{p}$ of $\mathcal{O}_E$ such that $(p)=\mathfrak{p}\bar{\mathfrak{p}}$. Let $\alpha$ and $\bar{\alpha}$ be generators of $\mathfrak{p}$ and $\bar{\mathfrak{p}}$ respectively. The cyclic subgroups $C$ and $E'[p]/C$ are the kernels of the multiplication by $\alpha$ and $\bar{\alpha}$. Let $(E',C)=(\CC/\Lambda_E,\frac{1}{\bar{\alpha}}\ZZ+\Lambda_E)$, where $\Lambda_E=\ZZ+\tau_E\ZZ$. The point $\{(\CC/\Lambda_E,\frac{1}{\bar{\alpha}}\ZZ+\Lambda_E),(\CC/\Lambda_E,\frac{1}{\alpha}\ZZ+\Lambda_E)\}$ is rational on~$X_0^+(p)$.

Now, we want to find $\hat{\tau}$ such that  $(\CC/\Lambda_E,\frac{1}{\bar{\alpha}}\ZZ+\Lambda_E)=(\CC/\hat{\Lambda},\frac{1}{p}\ZZ+\hat{\Lambda})$, where $\hat{\Lambda}=\ZZ+\hat{\tau}\ZZ$. Let $c,d\in\ZZ$ such that $\alpha=c\tau_E+d$. Then we can find $a,b\in\ZZ$ such that $ad-bc=1$ and so we have a matrix $\hat{\gamma}=\begin{pmatrix}
a & b \\
c & d
\end{pmatrix}\in\SL_2(\ZZ)$. The transformation $\hat{\tau}=\hat{\gamma} \tau_E$ correspond to the isogeny $z+\Lambda_E\mapsto \frac{z}{\alpha}+\hat{\Lambda}$ for every $z+\Lambda_E\in\CC/\Lambda_E$. Hence $\frac{1}{\bar{\alpha}}+\Lambda_E\mapsto \frac{1}{\alpha\bar{\alpha}}+\hat{\Lambda}=\frac{1}{p}+\hat{\Lambda}$ and the group $\frac{1}{\bar{\alpha}}\ZZ+\Lambda_E$ maps to $\frac{1}{p}\ZZ+\hat{\Lambda}$ and we are done.

\section{Computing the modular parametrization} \label{sec:compEmap}

In this section we assume to know the first $m$ Fourier coefficients of the modular forms involved, where $m$ is "large enough" for our purposes. Let $p$ be a prime number and let $J_0(p)$ be the jacobian variety of the modular curve $X_0(p)$. If there is an isogeny factor $E$ over $\QQ$ of dimension $1$ of $J_0(p)$, we know there is a non-constant morphism $\phi\colon X_0(p)\to E$ defined over $\QQ$. Let $f$ be the normalized eigenform in $\cfs_2(\Gammait_0(p))$ associated to the isogeny class of $E$. Let $\mathds{T}$ be the Hecke algebra over $\ZZ$ and let $I_f=\{T\in\mathds{T}: Tf=0\}$. We know that if the Fourier coefficients of $f$ belong to $\QQ$, the associated abelian variety $J_0(p)/I_f J_0(p)$ is an elliptic curve called \emph{optimal curve} (or \emph{strong Weil curve}). See Diamond and Shurman \cite[Sections 6.5 and 6.6]{DS05} for more details about these topics. If $E=J_0(p)/I_f J_0(p)$, then we call $\phi$ the \emph{modular parametrization with respect to $X_0(p)$}. The degree of the morphism $\phi$ is called the \emph{modular degree}. Using the identification of $X_0(p)$ with $\Gammait_0(p)\backslash\HH^*$, we can write
\begin{align*}
\phi\colon \Gammait_0(p)\backslash\HH^* &\to E(\CC) \\
\Gammait_0(p)\tau &\mapsto \phi(\tau)=(\phi_x(\tau),\phi_y(\tau)),
\end{align*}
where $\tau\in\HH^*$ and $\phi_x(\tau)$ and $\phi_y(\tau)$ are Fourier series with rational coefficients in the indeterminate $q=e^{2\pi i \tau}$.

\begin{lemma} \label{lem:factorparmod}
If $E$ is an elliptic curve over $\QQ$ of conductor a prime $p$ and with negative sign of the functional equation of the associated L-function, then the modular parametrization $\phi\colon X_0(p)\to E$ factors as $\phi=\phi_+\circ\pi_p$, where $\pi_p\colon X_0(p)\to X_0^+(p)$ is the natural projection and $\phi_+\colon  X_0^+(p)\to E$ is the modular parametrization with respect to $X_0^+(p)$. Moreover $\phi_+$ is defined over $\QQ$.
\end{lemma}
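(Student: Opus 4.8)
The plan is to deduce the factorization from the single assertion that $\phi$ is constant on the fibres of $\pi_p$, i.e. that $\phi\circ w_p=\phi$ as morphisms $X_0(p)\to E$, and then to invoke the universal property of the quotient $X_0^+(p)=X_0(p)/\langle w_p\rangle$. Everything rests on translating the hypothesis on the sign of the functional equation into a statement about $w_p$. For the newform $f$ of weight $2$ and prime level $p$ attached to $E$, the sign of the functional equation of $L(f,s)$ equals $-\lambda$, where $\lambda\in\{\pm1\}$ is the eigenvalue of the Atkin--Lehner involution $w_p$ on $f$; thus the negativity hypothesis is exactly $f\,|\,w_p=f$. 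Since $w_p^*\omega_f=\lambda\,\omega_f$ for the differential $\omega_f=2\pi i f(\tau)\,d\tau$, this says that $\omega_f$ is $w_p$-invariant. I would record at the outset the two consequences I shall use: $w_p^*\omega_f=\omega_f$, and $L(f,1)=0$ (immediate from $\Lambda(f,s)=-\Lambda(f,2-s)$ at the central point $s=1$).

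First I would compare $\phi$ and $\phi\circ w_p$ through their effect on the invariant differential $\omega_E$ of $E$. By construction $\phi^*\omega_E=c\,\omega_f$ for a nonzero constant $c$ (the Manin constant, whose value is irrelevant here), so $(\phi\circ w_p)^*\omega_E=w_p^*(c\,\omega_f)=c\,w_p^*\omega_f=c\,\omega_f=\phi^*\omega_E$, the factor $c$ surviving precisely because $\omega_f$ is $w_p$-invariant. Two morphisms from a smooth projective curve to an elliptic curve that pull $\omega_E$ back to the same form differ by a translation, since $(\psi_1-\psi_2)^*\omega_E=\psi_1^*\omega_E-\psi_2^*\omega_E$ vanishes and a map with zero differential is constant. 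Evaluating at the cusp $\infty$, and normalising $\phi$ so that $\phi(\infty)=0$, this translation equals $\phi(w_p\infty)-\phi(\infty)=\phi(0)$, where $0$ and $\infty$ are the two cusps of $X_0(p)$, swapped by $w_p$. The problem therefore reduces to proving that the image $\phi(0)$ of the cusp $0$ is the origin of $E$.

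To show $\phi(0)=0$ I would write $\phi(0)=\int_{\infty}^{0}\omega_f \bmod \Lambda$ along the imaginary axis and split the integral at the unique $w_p$-fixed point $\tau_0=i/\sqrt{p}$ on that axis, obtaining $\int_{\infty}^{0}\omega_f=\int_{\infty}^{\tau_0}\omega_f+\int_{\tau_0}^{0}\omega_f$. Applying the substitution $z\mapsto w_p z$ to the second integral and using $w_p^*\omega_f=\omega_f$ together with $w_p(\tau_0)=\tau_0$ and $w_p(0)=\infty$ turns it into $\int_{\tau_0}^{\infty}\omega_f=-\int_{\infty}^{\tau_0}\omega_f$, so the two pieces cancel and $\int_{\infty}^{0}\omega_f=0$ (this is the vanishing of $L(f,1)$). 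Hence $\phi(0)=0$ and $\phi\circ w_p=\phi$. By the universal property of the quotient, $\phi$ descends to a unique morphism $\phi_+\colon X_0^+(p)\to E$ with $\phi=\phi_+\circ\pi_p$. Its pullback of $\omega_E$ is the descent $\omega_+$ of $\omega_f$ (the regular $w_p$-invariant differentials on $X_0(p)$ being exactly the pullbacks of those on $X_0^+(p)$), so $\phi_+$ is the parametrization attached to $f$ regarded as a form on $X_0^+(p)$, i.e. the modular parametrization with respect to $X_0^+(p)$. Rationality follows by Galois descent: $w_p$, $\pi_p$, $\phi$ and $X_0^+(p)$ are all defined over $\QQ$, and for $\sigma\in\gal(\overline{\QQ}/\QQ)$ one has $\phi_+^{\sigma}\circ\pi_p=(\phi_+\circ\pi_p)^{\sigma}=\phi^{\sigma}=\phi=\phi_+\circ\pi_p$, whence $\phi_+^{\sigma}=\phi_+$ because $\pi_p$ is an epimorphism.

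I expect the main obstacle to be the bookkeeping that makes $\phi_+$ the \emph{optimal} parametrization of $X_0^+(p)$, and not merely a map to a curve isogenous to $E$: the period lattice of $\omega_+$ on $X_0^+(p)$ contains, and could a priori strictly contain, that of $\omega_f$ on $X_0(p)$, so one must confirm that $E$ is simultaneously the optimal quotient of $J_0(p)$ and of $J_0^+(p)$. Arguing directly with morphisms into the fixed curve $E$, as above, sidesteps the lattice-index comparison; the remaining delicate inputs are the precise correspondence between the sign of the functional equation and the $w_p$-eigenvalue, and the identification of $\phi_+^*\omega_E$ with the differential attached to $f$ on the quotient.
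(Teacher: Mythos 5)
Your argument is correct, and it is a fully fleshed-out version of what the paper disposes of in one line (``apply Galois theory to the function fields and use invariance of the differentials''). The skeleton is the same --- show $\phi\circ w_p=\phi$ and descend through the degree-$2$ quotient --- but you supply the step that the paper's sketch silently elides: invariance of $\omega_f$ under $w_p$ only forces $\phi\circ w_p$ and $\phi$ to agree up to translation by $\phi(0)-\phi(\infty)$, and this difference of cusps is a priori a nontrivial (torsion) point of $E(\QQ)$. Your evaluation of $\int_{\infty}^{0}\omega_f$ by folding the path at the fixed point $i/\sqrt{p}$, which is exactly the classical proof that $L(f,1)=0$ when the sign is $-1$, is the right way to kill this translation, and it is precisely where the hypothesis on the sign of the functional equation enters beyond merely placing $f$ in the $+1$-eigenspace. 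Your sign convention ($\varepsilon=-\lambda$ in weight $2$, so negative sign means $f|w_p=f$) and the Galois-descent argument for rationality via surjectivity of $\pi_p$ are both correct. The one caveat you raise yourself --- that identifying $\phi_+$ with \emph{the} modular parametrization of $X_0^+(p)$ in the optimal-quotient sense requires comparing period lattices on $J_0(p)$ and $J_0^+(p)$ --- is a genuine subtlety, but the paper never pins down that notion for $X_0^+(p)$ and uses $\phi_+$ only as an explicit morphism to $E$, so your construction delivers everything the lemma is used for.
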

\begin{proof}
Just apply Galois Theory to the associated function fields and use invariance under the Atkin-Lehner involution of the differentials of $X_0^+(p)$.
\end{proof}

Let $X_0^+(p)$ a modular curve of genus $g$ with an isogeny factor $E$ over $\QQ$ of dimension $1$ of the jacobian variety $J_0^+(p)$. We denote by $f_1,\ldots,f_g$ some $g$ linearly independent newforms of $\cfs_2(\Gammait_0(p))$ with eigenvalue $+1$ with respect to the Atkin-Lehner operator~$w_p$. This is a basis of $\cfs_2(\Gammait^+_0(p))$. The modular parametrization $\phi_+\colon  X_0^+(p)\to E$ is locally given by four homogeneous polynomials $p_x,q_x,p_y,q_y\in\ZZ[x_1,\ldots,x_g]$, such that $p_x$ and $q_x$ have the same degree, $p_y$ and $q_y$ have the same degree and
\begin{align} \label{eq:sistema}
\begin{sistema}
\phi_x(\tau)= \frac{p_x(f_1(\tau),\ldots,f_g(\tau))}{q_x(f_1(\tau),\ldots,f_g(\tau))} \\
\phi_y(\tau)= \frac{p_y(f_1(\tau),\ldots,f_g(\tau))}{q_y(f_1(\tau),\ldots,f_g(\tau))},
\end{sistema}
\end{align}
for every $\tau$ representative of $\Gammait_0(p)\tau$ chosen in a suitable open set of $X_0(p)$.

To find these polynomials explicitly we use some linear algebra. The main difficulty is that we don't know exactly the degrees of the polynomials $p_x,q_x,p_y,q_y$, but we know the degree of the morphism $\phi_+$. In Cremona \cite{Cre92} or in the LMFDB online database \cite{lmfdb} we can find the value of $\deg\phi$ and we have $\deg\phi_+=\frac{1}{2}\deg\phi$. For increasing degrees from $1$ to $\deg{\phi_+}$, we apply the following procedure to look for nontrivial relations. We rewrite the first equation as
\[ \label{eq:ctrimg}
p_x(f_1(\tau),\ldots,f_g(\tau))- q_x(f_1(\tau),\ldots,f_g(\tau))\phi_x(\tau)=0.
\]
Let $r=\begin{pmatrix}
g +d_x-1 \\
d_x
\end{pmatrix}$ be the number of monomials of degree $d_x$. Let $\mathcal{I}=\{F_1,\ldots,F_r\}$ be the set of the monomials obtained as product of $d_x$ elements of the basis $f_1,\ldots,f_g$, where repetitions are allowed, and let $\mathcal{J}=\{G_1,\ldots,G_r\}$ be the set with the same elements of $\mathcal{I}$ multiplied by $\phi_x$. Since the Fourier coefficients of $\phi_x,f_1,\ldots,f_g$ are algebraic numbers, the coefficients of the elements of $\mathcal{I}$ and $\mathcal{J}$ are also algebraic numbers and there is a number field $K_f$ of finite degree over $\QQ$, which contains all these Fourier coefficients. Multiplying the elements of $\mathcal{I}$ and $\mathcal{J}$ by a suitable algebraic number, we can assume that the Fourier coefficients belong to the ring of integers of $K_f$ and that they have integer coordinates with respect to a suitable basis of~$K_f$. Hence, there are $\alpha_1,\ldots,\alpha_r,\beta_1,\ldots,\beta_r\in\ZZ$ such that
\[
\begin{sistema}
\alpha_1 F_1+\ldots+\alpha_r F_r+\beta_1 G_1+\ldots+\beta_r G_r=0, \\
\alpha_1 F_1+\ldots+\alpha_r F_r\neq 0, \\
\beta_1 G_1+\ldots+\beta_r G_r\neq 0.
\end{sistema}
\]
These $\alpha_i$'s and $\beta_i$'s are the coefficients of $p_x$ and $q_x$ respectively. Different choices of $\alpha_1,\ldots,\alpha_r,\beta_1,\ldots,\beta_r$ correspond to the same map defined on different open sets. The same method is applied to the second equation of (\ref{eq:sistema}) to find the coefficients of $p_y$ and $q_y$.
\begin{remark}
We find the optimal curve with conductor $p$ in Cremona's tables in \cite{Cre92} or in the LMFDB online database \cite{lmfdb}. We use PARI to compute the $q$-expansions of the two components $\phi_x(\tau)$ and $\phi_y(\tau)$ of the modular parametrization of $X_0(p)$.
\end{remark}

\section{An estimation on heights} \label{sec:upbnd}

Let $n$ be a positive integer and let $X$ be a complete nonsingular curve defined over $\QQ$ in $\PP^{n-1}$ and with an elliptic curve $E$ as isogeny factor over $\QQ$ of its jacobian variety. In this section we suppose to know a Weierstrass equation of $E$, an explicit formula for the morphism \mbox{$\phi\colon X \to E$} defined over $\QQ$ and generators for the Mordell-Weil group of $E$. Writing $\phi$ in more explicit terms, we have
\[
\phi(x_1,\ldots,x_n)=\left(\frac{p_x(x_1,\ldots,x_n)}{q_x(x_1,\ldots,x_n)}, \frac{p_y(x_1,\ldots,x_n)}{q_y(x_1,\ldots,x_n)}\right),
\]
where $p_x,q_x$ are homogeneous polynomials with integer coefficients of degree $d_x$ and $p_y,q_y$ are homogeneous polynomials with integer coefficients of degree $d_y$. Let $r$ be the number of monomials of degree $d_x$ and let $\alpha_i$ and $\beta_i$, for \mbox{$i=1,\ldots, r$}, be the coefficients of $p_x$ and $q_x$ respectively. Moreover, let \mbox{$\alpha:=\log\max\left\{\sum_i\left|\alpha_i\right|,\sum_i\left|\beta_i\right|\right\}$} a constant that we use in the Proposition \ref{pro:upbnd} below.

In the paper \cite{Sil90}, Silverman gives an estimate of the difference between the canonical height and the Weil height on an elliptic curve. We use the Theorem~1.1 of \cite{Sil90} to prove the following proposition.
\begin{proposizione} \label{pro:upbnd}
Let the notation be as above and let $Q\in\PP^{n-1}$. Let $H(Q)$ be the naive height of $Q$ and let $h(Q):=\log H(Q)$. If $Q\in X(\QQ)$, then $\phi(Q)\in E(\QQ)$ and
\[
\hat{h}(\phi(Q))\le \mu(E)+1.07+\frac{1}{2}\Big(\alpha+d_x h(Q)\Big),
\]
where the quantity $\mu(E)$ is defined in Theorem 1.1 of \cite{Sil90}, the function $\hat{h}$ is the canonical height on $E$, the number $d_x$ is the degree of the homogeneous polynomials $p_x$ and $q_x$ defined above and $\alpha$ is the constant defined above.
\end{proposizione}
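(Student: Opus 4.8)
The plan is to reduce the statement to an upper bound on the naive height of the $x$-coordinate of $\phi(Q)$ and then feed that bound into the upper half of Silverman's Theorem 1.1. First I would observe that, since $\phi$ is defined over $\QQ$ and $Q\in X(\QQ)$, the image $P:=\phi(Q)$ is a rational point of $E$, so both $\hat{h}(P)$ and the Weil height of its $x$-coordinate are defined. Writing $x(P)=p_x(Q)/q_x(Q)$, the upper bound in Silverman's theorem reads $\hat{h}(P)\le \frac{1}{2}h(x(P))+\mu(E)+1.07$, where $h(x(P))=\log H(x(P))$ is the naive height of the $x$-coordinate. Hence it suffices to prove the single estimate $h(x(\phi(Q)))\le \alpha + d_x h(Q)$, after which substitution into Silverman's bound produces the claimed inequality immediately.

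To bound $h(x(\phi(Q)))$, I would pick the representative of $Q$ whose coordinates $x_1,\ldots,x_n$ are coprime integers, so that $H(Q)=\max_i|x_i|$ and both $p_x(Q)$ and $q_x(Q)$ are integers. Expanding $p_x$ over its $r$ monomials of degree $d_x$, each such monomial evaluated at $Q$ has absolute value at most $(\max_i|x_i|)^{d_x}=H(Q)^{d_x}$, so by the triangle inequality $|p_x(Q)|\le\big(\sum_i|\alpha_i|\big)H(Q)^{d_x}$ and likewise $|q_x(Q)|\le\big(\sum_i|\beta_i|\big)H(Q)^{d_x}$. The naive height of the rational number $p_x(Q)/q_x(Q)$ equals the larger of the absolute values of the numerator and denominator of its reduced fraction, and cancellation can only decrease this maximum; therefore
\[
H(x(\phi(Q)))\le \max\{|p_x(Q)|,|q_x(Q)|\}\le \max\Big\{\textstyle\sum_i|\alpha_i|,\ \sum_i|\beta_i|\Big\}\,H(Q)^{d_x}.
\]
Taking logarithms and using the definition of $\alpha$ gives $h(x(\phi(Q)))\le \alpha + d_x h(Q)$, as required.

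These steps are essentially routine, and the only points that need genuine care are the normalization conventions. I must check that the canonical height $\hat{h}$ and the quantity $\mu(E)$ are used exactly as in Silverman's Theorem 1.1 — in particular that the quoted upper bound is stated against $\frac{1}{2}h(x(P))$ and carries the additive constant $1.07$ — so that the factor $\frac{1}{2}$ and the constant land in the right places. I must also justify that replacing $Q$ by its coprime integer representative does not alter $H(Q)$ and that cancellation in $p_x(Q)/q_x(Q)$ does not raise the height. The main obstacle is thus purely one of bookkeeping: aligning the height normalization in the cited theorem with the naive projective height $H$ used throughout the paper.
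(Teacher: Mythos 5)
Your proposal is correct and follows essentially the same route as the paper's proof: bound $|p_x(Q)|$ and $|q_x(Q)|$ by $H(Q)^{d_x}$ times the coefficient sums for a coprime integer representative, deduce $h(P_x)\le\alpha+d_x h(Q)$, and substitute into the upper inequality of Silverman's Theorem~1.1. You merely spell out the triangle-inequality and cancellation steps that the paper leaves implicit.
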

\begin{proof}
Let $Q$ be a rational point on $X$. Let $(x_1:\ldots:x_n)$ its coprime integers coordinates in $\PP^{n-1}$. We denote by $P$ the image of this rational point under $\phi$, hence we have $\phi(x_1,\ldots,x_n)=P=(P_x,P_y)$, where
\[
P_x=\frac{p_x(x_1,\dots,x_n)}{q_x(x_1,\dots,x_n)}.
\]
If $|x_i|\leq H(Q)$, by the definition of height we have
\[
H(P_x)\leq H(Q)^{d_x} \max\left\{\sum_i\left|\alpha_i\right|,\sum_i\left|\beta_i\right|\right\}.
\]
Taking the logarithm in the previous inequality we get $h(P_x)\leq\alpha+d_x h(Q)$. Now, using the second inequality in Theorem 1.1 of \cite{Sil90}, we are done.
\end{proof}
\begin{remark}
Since $\phi$ is defined over $\QQ$, every rational point on the curve $X$ must go in a rational point on the elliptic curve $E$. Therefore, to find rational points on $X$, it is enough to search in the preimage $\phi^{-1}(P)$ letting $P$ run over the rational points of~$E$. 
\end{remark}
\begin{remark}
In all our cases the Mordell-Weil group has rank $1$ and is torsion-free, so it is generated by one point. One can find a generator in Cremona's tables in \cite{Cre92} or in the LMFDB online database \cite{lmfdb}.
\end{remark}
\begin{remark} \label{rem:upbnd}
If the Mordell-Weil group of $E$ has rank $1$ with generator $P_0$ and is torsion-free, every rational point $P$ of $E$ has the form $P=kP_0$ for some $k\in\ZZ$. In this case we have
\[
\hat{h}(P)=\hat{h}(kP_0)=k^2\hat{h}(P_0)\leq\mu(E)+1.07+\frac{1}{2}\Big(\alpha+d_x h(Q)\Big),
\]
by properties of the canonical height. See \cite[Chapter VIII, Section 9]{Sil86} for more details about the canonical height.
\end{remark}

\section{Tables} \label{sec:listeq}

In \ref{subsec:eq} we list the equations for the canonical model of the modular curves $X_0^+(p)$ for primes $p$ such that the genus $g$ is $6$ or $7$. We also list the expected rational points. The models all have good reduction modulo every prime $\ell\neq p$. In \ref{subsec:modpar} we give an explicit morphism $\phi_+\colon X_0^+(p)\to E$, whenever the jacobian variety $J_0^+(p)$ of $X_0^+(p)$ admits a $1$-dimensional isogeny factor $E$ over $\QQ$.

\subsection{Equations}\label{subsec:eq}

Here we list the genus, the equations of the canonical model and the expected rational points with the corresponding discriminants.
\begin{itemize}
\item Curve $X_0^+(163)$. Genus $g=6$. Equations of the canonical model in $\PP^{5}$
\[
\begin{sistema}
x_1x_5+x_2x_3+x_2x_4-x_2x_5+x_2x_6=0 \\
x_1^2-x_1x_2+x_1x_5-x_2x_5+x_3^2+x_3x_4=0 \\
x_2x_4-x_2x_5+x_3x_5=0 \\
-x_1^2+x_1x_2-x_1x_4+x_2x_4 + x_3x_6=0 \\
x_1x_3+x_1x_5+x_1x_6+x_2x_3-x_2x_5+x_4x_5=0 \\
-x_1^2+x_1x_2+x_1x_3+x_1x_4-x_1x_5+x_1x_6+x_2x_5-x_3^2+x_4^2+x_5x_6=0.
\end{sistema}
\]
\[
\begin{array}{|cc|cc|}
\toprule
\text{Rational point} & \text{Disc.} & \text{Rational point} & \text{Disc.} \\
\midrule
(0:0:0:0:0:1) & \text{cusp} & (0:1:1:0:1:0) & -12 \\
(24:10:13:15:-50:42) & -163 & (0:1:0:0:0:0) & -11 \\
(1:0:1:-2:0:-1) & -67 & (0:0:1:-1:0:0) & -8 \\
(1:1:2:-2:2:0) & -28 & (1:1:0:0:0:0) & -7 \\
(1:1:0:1:1:-1) & -27 & (2:-1:3:-4:-1:-2) & -3 \\
(0:0:0:0:1:0) & -19 & & \\
\bottomrule
\end{array}
\]
\item Curve $X_0^+(193)$. Genus $g=7$. Equations of the canonical model in $\PP^{6}$
\[
\begin{sistema}
-x_1^2-x_1x_4-x_1x_6-x_1x_7+x_2x_7+x_3^2+x_3x_4=0 \\
x_1x_2+x_1x_4-x_1x_5+x_1x_7-x_2x_3-x_2x_4-x_2x_7+x_3x_5=0 \\
x_1x_2-x_1x_5-x_2x_7+x_3x_6=0 \\
-x_1x_2-x_1x_3+2x_1x_5+x_1x_6+2x_2x_3+2x_2x_4-x_3^2+x_4^2+x_4x_5=0 \\
x_1^2-x_1x_2+2x_1x_4+x_1x_5+2x_1x_6+x_1x_7-x_2x_7-x_3^2+x_4^2+x_4x_6=0 \\
x_1x_3-x_1x_4-x_1x_5-2x_1x_6-x_1x_7-2x_2x_3-2x_2x_4+2x_2x_7+x_3^2-x_4^2+ x_4x_7=0 \\
x_1^2+x_1x_3+x_1x_4-2x_1x_5+x_1x_7-x_2x_3-3x_2x_4-x_2x_5-x_2x_7-x_4^2+x_5^2=0 \\
-x_1^2+2x_1x_2-x_1x_4-x_1x_5-2x_1x_6-x_1x_7-x_2x_3-x_2x_4+x_2x_7+x_3^2-x_4^2+ x_5x_6=0 \\
-x_1x_3+x_1x_5+x_1x_6+2x_2x_3+2x_2x_4-x_2x_7-x_3^2+x_4^2+x_5x_7=0 \\
-x_1x_4-x_1x_5-x_1x_7+x_2x_3-x_2x_4-x_2x_6+2x_2x_7-x_3x_7-x_4^2+x_6^2=0.
\end{sistema}
\]
\[
\begin{array}{|cc|cc|}
\toprule
\text{Rational point} & \text{Disc.} & \text{Rational point} & \text{Disc.} \\
\midrule
(0:0:0:0:0:0:1) & \text{cusp} & (1:0:-1:0:0:0:0) & -12 \\
(1:1:1:-1:2:0:-1) & -67 & (0:1:0:0:0:1:0) & -8 \\
(0:0:0:1:-1:-1:1) & -43 & (0:1:0:0:0:0:0) & -7 \\
(0:1:2:-2:0:0:0) & -28 & (1:0:0:1:0:-1:-1) & -4 \\
(0:1:0:0:1:0:0) & -27 & (3:2:3:0:2:0:0) & -3 \\
(1:0:0:-1:0:-1:1) & -16 & & \\
\bottomrule
\end{array}
\]
\item Curve $X_0^+(197)$. Genus $g=6$. Equations of the canonical model in $\PP^{5}$
\[
\begin{sistema}
-x_1x_2+x_1x_4-x_1x_5-x_1x_6+x_2x_3=0 \\
x_1^2-x_1x_3-x_1x_4+x_1x_5+x_2x_4=0 \\
2x_1x_2-2x_1x_3-2x_1x_4+x_1x_5+x_1x_6-x_2^2-x_2x_6+x_3^2+x_3x_4+x_4^2=0 \\
-x_1^2+x_1x_2+x_1x_3+x_1x_6-x_2^2-x_2x_6+x_4x_5=0 \\
x_1x_2-x_1x_3-x_1x_4+x_3x_5+x_4x_6=0 \\
-x_1x_2+x_1x_3+x_1x_4-x_1x_6+x_2x_5-x_2x_6-x_3x_5+x_3x_6+x_5^2=0.
\end{sistema}
\]
\[
\begin{array}{|cc|cc|}
\toprule
\text{Rational point} & \text{Disc.} & \text{Rational point} & \text{Disc.} \\
\midrule
(0:0:0:0:0:1) & \text{cusp} & (1:1:1:0:0:0) & -19 \\
(1:-1:3:2:6:-6) & -163 & (1:0:0:0:-1:1) & -16 \\
(1:1:1:1:0:1) & -43 & (1:0:0:1:0:1) & -7 \\
(1:0:2:-1:0:-1) & -28 & (1:0:2:0:1:-1) & -4 \\
\bottomrule
\end{array}
\]
\item Curve $X_0^+(211)$. Genus $g=6$. Equations of the canonical model in $\PP^{5}$
\[
\begin{sistema}
-x_1^2-x_1x_4-x_1x_5+x_2x_3-x_2x_6+x_3x_4=0 \\
-x_1x_3-x_1x_4-x_1x_5+x_2x_3-x_2x_6+x_3x_5=0 \\
x_1^2-x_1x_2+x_1x_3+x_1x_4-x_2x_3+x_3x_6=0 \\
2x_1^2-2x_1x_2+x_1x_3+3x_1x_4+x_1x_5-2x_2x_3-x_2x_4+2x_2x_6+x_4^2=0 \\
x_1x_3+x_1x_4+2x_1x_5-x_1x_6-2x_2x_3-x_2x_5+2x_2x_6+x_4x_5=0 \\
x_1^2-2x_1x_2+x_1x_3+2x_1x_4+x_1x_5-x_1x_6-2x_2x_3-x_2x_4+x_2x_5+2x_2x_6-x_4x_6+ x_5^2=0.
\end{sistema}
\]
\[
\begin{array}{|cc|cc|}
\toprule
\text{Rational point} & \text{Disc.} & \text{Rational point} & \text{Disc.} \\
\midrule
(0:0:0:0:0:1) & \text{cusp} & (0:1:1:0:0:1) & -12 \\
(1:0:-1:-1:1:-1) & -67 & (0:0:1:0:0:0) & -8 \\
(2:1:2:0:0:-2) & -28 & (0:1:0:0:0:0) & -7 \\
(1:0:1:-1:-1:-1) & -27 & (2:-3:-1:-2:4:1) & -3 \\
\bottomrule
\end{array}
\]
\item Curve $X_0^+(223)$. Genus $g=6$. Equations of the canonical model in $\PP^{5}$
\[
\begin{sistema}
-x_1^2+x_1x_4-x_2x_3-x_2x_4+x_2x_5=0 \\
x_1^2+2x_1x_3-x_1x_4+x_1x_5+x_2^2+2x_2x_4+x_2x_6+x_3^2=0 \\
-x_1^2-x_1x_2-x_1x_3+x_1x_4-x_2x_4+x_2x_6+x_3x_4=0 \\
x_1^2-x_1x_2+2x_1x_5-x_1x_6+x_2^2+x_2x_4+2x_2x_6+x_3x_5=0 \\
-x_1^2-x_1x_2+x_1x_3+x_1x_4+x_2x_6-x_3x_6+x_4x_5=0 \\
-2x_1^2-2x_1x_2+x_1x_3+3x_1x_4+2x_1x_5+x_1x_6+x_2^2+4x_2x_6-x_3x_6-x_4^2-x_4x_6+ x_5^2=0.
\end{sistema}
\]
\[
\begin{array}{|cc|cc|}
\toprule
\text{Rational point} & \text{Disc.} & \text{Rational point} & \text{Disc.} \\
\midrule
(0:0:0:0:0:1) & \text{cusp} & (0:1:1:-1:0:0) & -12 \\
(2:-2:2:3:6:7) & -163 & (0:0:0:1:0:-1) & -11 \\
(1:0:-2:1:0:1) & -67 & (2:-3:-3:-1:-6:2) & -3 \\
(1:0:0:1:0:1) & -27 & & \\
\bottomrule
\end{array}
\]
\item Curve $X_0^+(229)$. Genus $g=7$. Equations of the canonical model in $\PP^{6}$
\[
\begin{sistema}
-x_1x_5+x_2x_4-x_2x_6+x_2x_7=0 \\
x_1x_2-x_1x_3-x_1x_4-x_1x_5+x_2x_3+x_2x_4-x_2x_6+x_3^2+x_3x_4=0 \\
-x_1x_5+x_2^2+x_2x_3+x_2x_4-x_2x_5-x_2x_6+x_3x_5=0 \\
x_1x_2-x_1x_3-x_1x_4-x_1x_6-x_2^2+x_2x_4+x_3^2+x_3x_6=0 \\
x_1x_2+x_1x_3-x_1x_6-x_1x_7-x_2^2-x_2x_3+x_2x_6+x_3x_7=0 \\
-2x_1x_2+x_1x_4+x_1x_5+x_1x_6+x_1x_7-x_2x_3-x_2x_4+x_2x_6-x_3^2+x_4^2=0 \\
-x_1x_3+x_1x_6-x_2^2-x_2x_3-x_2x_4+x_2x_5+x_4x_5=0 \\
-x_1x_2+x_1x_5+x_1x_7+x_2x_4-x_2x_5+x_5x_6=0 \\
x_1x_2-2x_1x_5-x_1x_6-x_1x_7+x_2x_3+2x_2x_4-x_2x_5-x_2x_6+x_3^2-x_4x_6+x_5x_7=0 \\
x_1x_5-x_2x_3-x_2x_4-x_3^2-x_4x_7+x_6^2=0.
\end{sistema}
\]
\[
\begin{array}{|cc|cc|}
\toprule
\text{Rational point} & \text{Disc.} & \text{Rational point} & \text{Disc.} \\
\midrule
(0:0:0:0:0:0:1) & \text{cusp} & (0:1:-1:0:0:0:0) & -12 \\
(1:1:0:1:1:0:0) & -43 & (1:0:0:0:0:0:0) & -11 \\
(1:0:1:0:0:1:0) & -27 & (2:0:1:-1:0:1:0) & -4 \\
(0:0:0:0:1:0:0) & -19 & (2:-3:-1:0:-6:-4:0) & -3 \\
(0:0:1:-1:0:-1:0) & -16 & & \\
\bottomrule
\end{array}
\]
\item Curve $X_0^+(233)$. Genus $g=7$. Equations of the canonical model in $\PP^{6}$
\[
\begin{sistema}
-x_1x_2-x_1x_3-x_1x_4+x_2^2-x_2x_3-x_2x_5-x_2x_6+x_2x_7=0 \\
x_1x_2+x_1x_4-x_2^2+x_2x_6+x_3x_5=0 \\
x_1^2-2x_1x_2-x_1x_3+x_1x_5+2x_2^2-2x_2x_3-x_2x_4-2x_2x_5-x_2x_6+x_3x_6=0 \\
-2x_1x_3-x_1x_6+x_2^2-x_2x_3-x_2x_4-x_2x_5+x_3x_7=0 \\
x_1^2-2x_1x_2+x_1x_4+x_1x_5+2x_2^2-2x_2x_3-2x_2x_5-x_2x_6+x_3x_4+x_4^2=0 \\
x_1x_3+x_1x_6-x_2^2+x_2x_3+x_2x_5+x_4x_5=0 \\
x_1x_2+x_1x_3-x_1x_7-x_2^2+x_2x_3-x_2x_4+x_2x_5+x_2x_6+x_4x_6=0 \\
-x_1x_2-x_1x_3-x_1x_4+x_1x_5-x_1x_6-x_1x_7+2x_2^2-x_2x_3-x_2x_4-2x_2x_5-x_2x_6+ x_5^2=0 \\
-x_1^2+x_1x_2+2x_1x_3-x_1x_5+x_1x_6+x_1x_7-2x_2^2+2x_2x_3+x_2x_4+2x_2x_5+x_4x_7+ x_5x_6=0 \\
-x_1^2+x_1x_2-x_1x_5+x_2x_4-x_2x_6+x_5x_7+x_6^2=0.
\end{sistema}
\]
\[
\begin{array}{|cc|cc|}
\toprule
\text{Rational point} & \text{Disc.} & \text{Rational point} & \text{Disc.} \\
\midrule
(0:0:0:0:0:0:1) & \text{cusp} & (0:0:1:0:0:0:0) & -8 \\
(1:0:2:-2:1:0:2) & -28 & (1:0:0:0:-1:0:0) & -7 \\
(0:0:1:-1:0:0:0) & -19  & (1:1:0:-2:1:2:1) & -4 \\
(1:1:0:0:1:0:1) & -16 & & \\
\bottomrule
\end{array}
\]
\item Curve $X_0^+(241)$. Genus $g=7$. Equations of the canonical model in $\PP^{6}$
\[
\begin{sistema}
x_1^2+x_1x_4-x_2^2-x_2x_4+x_2x_6=0 \\
x_1x_5-x_2x_3-x_2x_5+x_2x_7=0 \\
x_1x_2+2x_1x_3+x_1x_6-x_1x_7-x_2^2+x_3^2=0 \\
x_1x_3-x_1x_7+x_2x_4+x_3x_4=0 \\
-x_1x_4-x_1x_5-x_2x_4+x_2x_5-x_3x_5-x_4^2+x_4x_5=0 \\
-x_1^2+x_1x_2-x_1x_3+x_1x_4+x_1x_5+x_1x_7+x_2x_3-2x_2x_4-x_2x_5+x_3x_5-x_3x_6+ x_4x_6=0 \\
-x_1^2-x_1x_3+x_1x_4-x_1x_6+x_2^2+x_2x_3-x_2x_4+x_3x_5-x_3x_6-x_3x_7+x_4x_7=0 \\
-x_1x_2+x_1x_3-3x_1x_4-x_1x_5+x_1x_6+x_2^2+x_2x_4-x_3x_5+x_3x_6-x_4^2+x_5^2=0 \\
-x_1^2-x_1x_3+x_1x_4+x_1x_5-x_1x_6+x_2^2+x_2x_3-2x_2x_4-x_2x_5+x_3x_5-x_3x_6- x_3x_7+x_5x_6=0 \\
-x_1^2+x_1x_2-x_1x_3+x_1x_4+2x_1x_5+x_1x_7+x_2x_3-2x_2x_4-2x_2x_5+x_5x_7+ x_6^2=0.
\end{sistema}
\]
\[
\begin{array}{|cc|cc|}
\toprule
\text{Rational point} & \text{Disc.} & \text{Rational point} & \text{Disc.} \\
\midrule
(0:0:0:0:0:0:1) & \text{cusp} & (1:1:0:0:1:0:0) & -12 \\
(1:0:-2:-1:0:0:0) & -67 & (0:0:0:1:1:0:0) & -8 \\
(0:1:-1:0:0:1:-1) & -27 & (1:-1:-2:0:0:0:-2) & -4 \\
(1:1:0:0:0:0:0) & -16 & (3:1:-4:-6:-3:4:2) & -3 \\
\bottomrule
\end{array}
\]
\item Curve $X_0^+(257)$. Genus $g=7$. Equations of the canonical model in $\PP^{6}$
\[
\begin{sistema}
-x_1^2-x_1x_2+x_1x_4-x_2^2-x_2x_4+x_2x_5=0 \\
-x_1^2-2x_1x_2+x_1x_3+x_1x_4-x_1x_5-2x_2^2-x_2x_4+x_2x_6=0 \\
-x_1x_2-x_1x_6-x_2x_3+x_2x_7=0 \\
x_1x_2-x_1x_3+x_1x_6+x_2^2+x_2x_4+x_3x_4=0 \\
-x_1x_3-x_1x_4+x_1x_6-x_1x_7+x_2^2-x_2x_3+x_2x_4+x_3x_5=0 \\
x_1^2+x_1x_2-x_1x_3-x_1x_4+2x_2^2-x_2x_3+2x_2x_4-x_3x_6+x_4x_5=0 \\
x_1^2+x_1x_2-2x_1x_4+x_1x_5-x_1x_7+2x_2^2-x_2x_3+x_2x_4+x_3^2-x_3x_6-x_3x_7+ x_4x_6=0 \\
x_1^2-x_1x_3-x_1x_4-x_1x_5-x_1x_6+x_2^2-x_2x_3+x_2x_4-x_3^2-x_3x_6+x_3x_7+ x_5^2=0 \\
x_1^2+x_1x_3-2x_1x_4-2x_1x_7+x_2^2-x_2x_3+x_2x_4+x_3^2-x_3x_6-x_3x_7+x_4x_7+ x_5x_6=0 \\
-2x_1x_2+2x_1x_3-2x_1x_6-x_1x_7-2x_2^2-x_2x_4-x_3x_6+x_4x_7-x_5x_7+x_6^2=0.
\end{sistema}
\]
\[
\begin{array}{|cc|cc|}
\toprule
\text{Rational point} & \text{Disc.} & \text{Rational point} & \text{Disc.} \\
\midrule
(0:0:0:0:0:0:1) & \text{cusp} & (0:0:1:0:0:0:1) & -11 \\
(1:0:0:1:0:0:-1) & -67 & (0:0:0:1:0:0:0) & -8 \\
(0:1:0:-1:0:1:0) & -16 & (2:-1:0:1:0:1:0) & -4 \\
\bottomrule
\end{array}
\]
\item Curve $X_0^+(269)$. Genus $g=6$. Equations of the canonical model in $\PP^{5}$
\[
\begin{sistema}
x_1x_4+x_2^2+x_2x_3+x_3^2+x_3x_5=0 \\
-x_1x_2-x_1x_5+x_2^2+x_2x_3+x_2x_5+x_3x_6=0 \\
x_1x_2+2x_1x_4+x_1x_6+x_3^2+x_3x_4+x_4^2=0 \\
x_1x_2-x_1x_4+x_1x_5-x_2^2-x_2x_3+x_2x_4+x_3x_4+x_4x_5=0 \\
x_1x_2+x_1x_3+x_1x_4+x_1x_5+x_1x_6-x_2x_3-x_2x_5+x_4x_6=0 \\
-x_1x_2-x_1x_3-2x_1x_4-x_1x_5-x_1x_6-x_2^2+2x_2x_5-x_2x_6-x_3^2+x_5^2=0.
\end{sistema}
\]
\[
\begin{array}{|cc|cc|}
\toprule
\text{Rational point} & \text{Disc.} & \text{Rational point} & \text{Disc.} \\
\midrule
(0:0:0:0:0:1) & \text{cusp} & (1:1:0:-1:0:0) & -16 \\
(1:-1:0:-1:1:2) & -67 & (1:0:0:0:0:0) & -11 \\
(1:1:0:-1:-1:0) & -43 & (1:-1:2:-3:0:0) & -4 \\
\bottomrule
\end{array}
\]
\item Curve $X_0^+(271)$. Genus $g=6$. Equations of the canonical model in $\PP^{5}$
\[
\begin{sistema}
x_1^2+2x_1x_2+x_1x_5-x_1x_6-x_2x_4-2x_2x_5-x_2x_6+x_3x_5=0 \\
x_1^2+x_1x_2-x_1x_4-x_1x_6-x_2x_3-x_2x_4-x_2x_5-x_2x_6+x_4^2=0 \\
-x_1^2-2x_1x_2-x_1x_5+x_1x_6+x_2x_3+x_2x_4+x_2x_5+x_4x_5=0 \\
x_1x_2-x_1x_4-x_1x_6-x_2x_3+x_2x_6+x_3^2+x_4x_6=0 \\
x_1^2+3x_1x_2+2x_1x_3+x_1x_5-3x_1x_6-2x_2x_4-x_2x_5+x_5^2=0 \\
-x_1x_2-x_1x_3-x_1x_4-x_1x_5+x_1x_6-x_2x_3+x_2x_4+x_2x_5+x_2x_6+x_3x_4+x_5x_6=0.
\end{sistema}
\]
\[
\begin{array}{|cc|cc|}
\toprule
\text{Rational point} & \text{Disc.} & \text{Rational point} & \text{Disc.} \\
\midrule
(0:0:0:0:0:1) & \text{cusp} & (0:1:0:0:0:0) & -19 \\
(1:0:1:1:0:1) & -43 & (1:0:1:0:0:1) & -12 \\
(0:1:1:1:-1:0) & -27 & (3:-2:-5:4:-4:-3) & -4 \\
\bottomrule
\end{array}
\]
\item Curve $X_0^+(281)$. Genus $g=7$. Equations of the canonical model in $\PP^{6}$
\[
\begin{sistema}
-x_1x_2+x_1x_3+x_2x_3+x_2x_4-x_3x_4+x_3x_6=0 \\
-x_1x_6-x_2x_3-x_2x_4-x_2x_5+x_3^2+x_3x_5+x_3x_7=0 \\
x_1x_2+x_1x_7+2x_3x_4-x_3x_5+x_4^2=0 \\
-x_1x_2-2x_1x_3-x_1x_4-x_1x_5-x_1x_7-x_2x_3+2x_3x_5+x_4x_5=0 \\
x_1x_2-x_1x_3+x_2x_5+2x_3x_4+x_4x_6=0 \\
2x_1x_3+2x_1x_4+x_1x_5+x_1x_6+x_1x_7-x_2^2+x_2x_3+x_2x_4+x_2x_5+x_3x_4-2x_3x_5+ x_4x_7=0 \\
x_1x_2+2x_1x_3+x_1x_4+x_1x_5+x_1x_6+x_1x_7+x_2x_3-x_2x_4-x_3x_5+x_5^2=0 \\
x_1x_2-x_1x_3+x_2^2-x_2x_3-x_2x_4-x_2x_5+x_3x_5+x_5x_6=0 \\
x_1x_2-x_1x_3-x_1x_4-x_1x_7+x_2^2+x_2x_6+x_3x_5+x_5x_7=0 \\
x_1x_2-x_1x_3+x_1x_6-x_2x_3-x_2x_4-x_2x_7+2x_3x_4+x_6^2=0.
\end{sistema}
\]
\[
\begin{array}{|cc|cc|}
\toprule
\text{Rational point} & \text{Disc.} & \text{Rational point} & \text{Disc.} \\
\midrule
(0:0:0:0:0:0:1) & \text{cusp} & (1:0:0:0:0:0:0) & -16 \\
(2:-5:-5:-1:1:5:-3) & -163 & (1:0:0:1:0:0:-1) & -8 \\
(0:1:-1:1:1:1:-1) & -43 & (0:0:1:0:0:0:-1) & -7 \\
(0:0:1:-2:0:-2:-1) & -28 & (1:2:-2:2:2:0:-2) & -4 \\
\bottomrule
\end{array}
\]
\item Curve $X_0^+(359)$. Genus $g=6$. Equations of the canonical model in $\PP^{5}$
\[
\begin{sistema}
x_1x_2 -x_1x_5 + x_1x_6 + x_3x_4=0 \\
x_1^2 + x_1x_2 + x_1x_3 -2x_1x_4 - x_1x_5 + x_2x_4 + x_3x_5=0 \\
-3x_1x_2 -x_1x_3 + 2x_1x_4 + 2x_1x_5 - x_1x_6 + x_2^2 -x_2x_3 - x_2x_4-x_3^2 + x_3x_6=0 \\
x_1^2 + 3x_1x_2 + x_1x_3 -2x_1x_4 -2x_1x_5 + x_1x_6-x_2^2 +x_2x_4 + x_3^2 + x_4x_5=0 \\
-x_1^2 -2x_1x_2 -2x_1x_3 + 3x_1x_4 + 3x_1x_5 - x_6 + x_2^2 -2x_2x_4 - x_2x_5 -x_3^2 + x_4x_6=0 \\
x_1^2 +2x_1x_3 -2x_1x_4 - 2x_1x_5 + x_2^2 +x_2x_4 - x_2x_6 + x_5^2=0.
\end{sistema}
\]
\[
\begin{array}{|cc|cc|}
\toprule
\text{Rational point} & \text{Disc.} & \text{Rational point} & \text{Disc.} \\
\midrule
(0:0:0:0:0:0:1) & \text{cusp} & (1:0:0:0:0:0:0) & -28 \\
(2:-5:-5:-1:1:5:-3) & -163 & (1:0:0:1:0:0:-1) & -19 \\
(0:1:-1:1:1:1:-1) & -67 & (0:0:1:0:0:0:-1) & -7 \\
(0:0:1:-2:0:-2:-1) & -43 & & \\
\bottomrule
\end{array}
\]
\end{itemize}

\subsection{Modular parametrization}\label{subsec:modpar}

Here we list the isogeny factor $E$ of $J_0^+(p)$, a generator $P_0$ of the Mordell Weil group, an explicit formula for the modular parametrization~$\phi_+\colon X_0^+(p)\to E$ and the modular degree $\deg(\phi_+)$ of the morphism $\phi_+$.
\begin{itemize}
\item Curve $X_0^+(163)$. Elliptic curve $E: y^2+y=x^3-2x+1$. \\
\noindent
Mordell-Weil generator $P_0=(1,0)$. Modular degree $\deg(\phi_+)=3$. \\
\noindent
Modular parametrization: $\phi_+(x_1:x_2:x_3:x_4:x_5:x_6)=(P_x,P_y)$,
\[
P_x =\frac{-2x_3-x_4+x_5-x_6}{-x_3+x_5}; \, P_y =\frac{-x_1^2+x_1x_2+2x_1x_3+ 2x_1x_4+2x_1x_6-x_3^2+x_4^2}{x_1x_3-x_1x_5}.
\]

\item Curve $X_0^+(197)$. Elliptic curve $E: y^2+y=x^3-5x+4$. \\
\noindent
Mordell-Weil generator $P_0=(1,0)$. Modular degree $\deg(\phi_+)=5$. \\
\noindent
Modular parametrization: $\phi_+(x_1:x_2:x_3:x_4:x_5:x_6)=(P_x,P_y)$,
\begin{align*}
\num(P_x)&=-38x_1^2-25x_1x_2-6x_1x_3+13x_1x_4-61x_1x_5+11x_1x_6+22x_2^2+ \\
&+10x_2x_5+20x_2x_6+17x_3x_5-11x_3x_6, \\
\den(P_x)&=-36x_1^2-34x_1x_2+11x_1x_3+30x_1x_4-27x_1x_5-8x_1x_6+12x_2^2+ \\
&+5x_2x_5+11x_2x_6, \\
\num(P_y) &=1715x_1^3-899x_1^2x_2-2691x_1^2x_3-1013x_1^2x_4-2889x_1^2x_5-81x_1^2x_6+ \\
&+1875x_1x_2^2+2476x_1x_2x_5+1176x_1x_2x_6-669x_1x_3x_5+1224x_1x_3x_6+ \\
&+2205x_1x_5x_6-1306x_1x_6^2-384x_2^2x_5-384x_2x_5x_6, \\
\den(P_y)&= 961x_1^3-49x_1^2x_2+3875x_1^2x3-632x_1^2x_4+1195x_1^2x_5+356x_1^2x_6+ \\
&+593x_1x_2^2+655x_1x_2x_5+432x_1x_2x_6-839x_1x_3^2-1680x_1x_3x_4+ \\
&+154x_1x_3x_5+384x_1x_5^2.
\end{align*}

\item Curve $X_0^+(229)$. Elliptic curve $E: y^2+xy=x^3-2x-1$. \\
\noindent
Mordell-Weil generator $P_0=(-1,1)$. Modular degree $\deg(\phi_+)=4$. \\
\noindent
Modular parametrization: $\phi_+(x_1:x_2:x_3:x_4:x_5:x_6:x_7)=(P_x,P_y)$,
\begin{align*}
P_x &=\frac{-x_2-x_5+x_6}{x_2+x_5}, \\
\num(P_y)&=x_1^2+3x_1x_2+7x_1x_3+6x_1x_4+x_1x_5-x_1x_6-4x_1x_7+ \\
&-2x_2x_3-2x_2x_4+3x_2x_6-2x_3^2+x_4x_6-2x_4x_7-x_6x_7, \\
\den(P_y)&=x_1^2+5x_1x_2+x_1x_4+2x_1x_5-x_1x_6- x_1x_7-x_3^2-x_4x_7+x_6^2.
\end{align*}

\item Curve $X_0^+(269)$. Elliptic curve $E: y^2+y=x^3-2x-1$. \\
\noindent
Mordell-Weil generator $P_0=(-1,0)$. Modular degree $\deg(\phi_+)=3$. \\
\noindent
Modular parametrization: $\phi_+(x_1:x_2:x_3:x_4:x_5:x_6)=(P_x,P_y)$,
\[
P_x =\frac{x_4+x_6}{x_2}, \qquad P_y =\frac{x_1x_5-x_2x_3+x_3x_4}{x_1x_2}.
\]

\item Curve $X_0^+(359)$. Elliptic curve $E: y^2+xy+y=x^3-x^2-7x+8$. \\
\noindent
Mordell-Weil generator $P_0=(2,-1)$. Modular degree $\deg(\phi_+)=4$. \\
\noindent
Modular parametrization: $\phi_+(x_1:x_2:x_3:x_4:x_5:x_6)=(P_x,P_y)$,
\begin{align*}
P_x &=\frac{-4x_1^2 + 3x_1x_2 -4x_1x_3 -x_1x_5 + 3x_1x_6 + x_2x_4 + x_4^2}{-2x_1^2+ x_1x_2 -2x_1x_3 + x_1x_6}, \\
P_y &=\frac{-5x_1^2 + 4x_1x_2 -6x_1x_3 + 5x_1x_4 + 4x_1x_6 -x_2x_4 -x_2x_5 - x_2x_6 + x_3^2 + 2x_4^2}{3x_1^2 + 3x_1x_3 -2x_1x_4 - 2x_1x_5 + x_2x_4}.
\end{align*}
\end{itemize}

\end{document}